\newtheorem{thm}{Theorem}
\theoremstyle{definition}
\theoremstyle{remark}
\theoremstyle{definition}
\title{A note on norm-based Lyapunov functions via contraction analysis}
\author{Samuel Coogan and Murat Arcak\thanks{The authors are with the Department of Electrical Engineering and Computer Sciences, University of California, Berkeley, Berkeley, CA. \texttt{\{scoogan,arcak\}@eecs.berkeley.edu}}}
\date{}
\renewcommand\footnotemark{}
\begin{document}
\maketitle

\abstract{It is well know that for globally contractive autonomous systems, there exists a unique equilibrium and the distance to the equilibrium evaluated along any trajectory decreases exponentially with time. We show that, additionally, the magnitude of the velocity evaluated along any trajectory decreases exponentially, thus giving an alternative choice of Lyapunov function.}

\section{Main Result}

Consider the nonlinear system
\begin{align}
  \label{eq:1}
  \dot{x}=f(x)
\end{align}
for $x\in\mathbb{R}^n$ and continuously differentiable $f(\cdot)$. Denote the Jacobian as
\begin{align}
  \label{eq:2}
  J(x)\triangleq \frac{\partial f}{\partial x}(x).
\end{align}

Let $|\cdot|$ be a vector norm on $\mathbb{R}^n$, $\Vert\cdot\Vert$ its induced matrix norm, and $\mu(A)\triangleq \lim_{h\to 0^+}\frac{1}{h}(\Vert I+hA\Vert-1)$ the associated matrix measure.
\begin{thm}
\label{thm:1}
If there exists $c>0$ such that $\mu(J(x))\leq -c$ for all $x\in \mathbb{R}^n$, then 
  \begin{align}
    \label{eq:3}
    |f(x(t))|\leq |f(x(0))|e^{-ct}.
  \end{align}
\end{thm}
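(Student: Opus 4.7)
The plan is to observe that the velocity $v(t) \triangleq f(x(t))$ satisfies a linear time-varying differential equation along the trajectory, and then apply the standard matrix-measure bound to this variational equation.

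First, since $x(t)$ solves \eqref{eq:1} and $f$ is continuously differentiable, $v(t) = f(x(t)) = \dot{x}(t)$ is itself differentiable, and the chain rule gives
\begin{align*}
  \dot{v}(t) = J(x(t))\, \dot{x}(t) = J(x(t))\, v(t).
\end{align*}
Thus $v(\cdot)$ is a solution of the linear time-varying system $\dot{v} = A(t) v$ with $A(t) \triangleq J(x(t))$.

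Next, I would invoke the well-known Coppel / matrix-measure inequality: for any solution of $\dot{v} = A(t) v$, the upper-right Dini derivative of $|v(t)|$ satisfies
\begin{align*}
  D^+ |v(t)| \leq \mu(A(t))\, |v(t)|.
\end{align*}
This follows by writing $|v(t+h)| = |v(t) + h A(t) v(t) + o(h)| \leq \|I + h A(t)\| \cdot |v(t)| + o(h)$ and taking $h \to 0^+$, using the definition of $\mu$. Under the hypothesis $\mu(J(x)) \leq -c$ for all $x \in \mathbb{R}^n$, we have $\mu(A(t)) \leq -c$ for all $t$, so $D^+ |v(t)| \leq -c |v(t)|$.

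Finally, a standard comparison (Gronwall-type) argument applied to this differential inequality yields $|v(t)| \leq |v(0)| e^{-ct}$, which is exactly \eqref{eq:3} since $v(t) = f(x(t))$. The only mildly delicate step is justifying the Dini-derivative bound for a general (possibly non-smooth) vector norm; this is classical but worth stating carefully, and is the one place where one must be attentive rather than mechanical.
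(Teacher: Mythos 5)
Your proposal is correct and follows essentially the same route as the paper: both observe that $v(t)=f(x(t))$ satisfies the variational equation $\dot{v}=J(x(t))v$ and then derive $D^+|v(t)|\leq \mu(J(x(t)))\,|v(t)|$ by expanding $|v(t+h)|$ to first order and bounding with the induced norm $\Vert I+hJ(x)\Vert$, exactly as in the paper's chain \eqref{eq:4}--\eqref{eq:6}. The only cosmetic difference is that you package the key step as Coppel's inequality for linear time-varying systems and close with a comparison lemma, whereas the paper proves the same Dini-derivative bound inline and closes by integrating the a.e.\ differential inequality for the absolutely continuous function $V(x(t))$.
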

  \begin{proof}
    Let $V(x)\triangleq |f(x)|$. $V(x(t))$ is then absolutely continuous as a function of $t$ and therefore
    \begin{align}
      \label{eq:4}
      \dot{V}(x(t))&\triangleq \lim_{h\to 0+}\frac{V(x(t+h))-V(x(t))}{h}
    \end{align}
for almost all $t$. Furthermore,
\begin{align}
  \label{eq:8}
\lim_{h\to 0^+}\bigg|\frac{|f(x(t+h)))|-|f(x)+h\dot{f}(x)|}{h}\bigg|&\leq \lim_{h\to 0^+}\left|\frac{f(x(t+h))-f(x)}{h}-\dot{f}(x)\right|\\
\label{eq:8-2}&=0
\end{align}
where we use the definition of $\dot{f}(x)$ and the fact $\big||x|-|y|\big|\leq |x-y|$. Since also $\dot{f}(x)=J(x)f(x)$, we combine \eqref{eq:4}--\eqref{eq:8-2} and obtain
\begin{align}
  \label{eq:6}
  \dot{V}(x(t))&=\lim_{h\to 0+}\frac{|f(x)+hJ(x)f(x)|-|f(x)|}{h}\\
&\leq\lim_{h\to 0+} \frac{\Vert I+hJ(x)\Vert \cdot |f(x)|-|f(x)|}{h}\\
&=\lim_{h\to 0+} \frac{\Vert I+hJ(x)\Vert-1}{h}|f(x)|\\
&=\mu(J(x))V(x).
\end{align}
By hypothesis, we then have $\dot{V}(x)\leq -c V(x)$, and \eqref{eq:3} follows by integration. 
  \end{proof}

Furthermore, under the hypothesis of Theorem \ref{thm:1}, $f(x)$ is a diffeomorphism from $\mathbb{R}^n$ to itself \cite[pp. 34--35]{Desoer:2008bh}, thus $V(x)$ is positive definite and radially unbounded. By standard Lyapunov theory \cite{khalil}, global asymptotic stability of the unique equilibrium $x^*\triangleq f^{-1}(0)$ follows.

\section{Discussion}
  It is well known that for contractive systems with $\mu(J(x))\leq -c$ for all $x$, $|x(t)-\xi(t)|\leq |x(0)-\xi(0)|e^{-ct}$ for any pair of trajectories $x(\cdot)$ and $\xi(\cdot)$ \cite{Sontag:2010fk}. Taking $\xi(t)\equiv x^*\triangleq f^{-1}(0)$, we see that for such systems, $V(x)=|x-x^*|$ is a Lyapunov function guaranteeing global asymptotic stability. Theorem \ref{thm:1} demonstrates that $V(x)=|f(x)|$ is an alternative choice of Lyapunov function.  Additionally, it is straightfoward that for any continuously differentiable class-$\mathcal{K}$ function $\rho(\cdot)$,
\begin{align}
  \label{eq:11}
  x\mapsto \rho(|f(x)|)
\end{align}
is also a Lyapunov function.

Furthermore, Theorem \ref{thm:1} is a generalization of Krasovskii's well known sufficient condition for asymptotic stability \cite{Krasovskii:1963ve}. Indeed, Krasovskii's theorem is a special case of Theorem \ref{thm:1} when $|\cdot|$ is taken to be a weighted Euclidean norm, \emph{i.e.} $|x|=(x^TPx)^{1/2}$ for some positive definite matrix $P$ and therefore \cite{Desoer:2008bh}
\begin{align}
  \label{eq:7}
  \mu(A)=\bar{\lambda}_i\left(\frac{B+B^T}{2}\right),\quad B\triangleq P^{1/2}AP^{-1/2}
\end{align}
where $\bar{\lambda}_i(M)$ denotes the largest eigenvalue of $M$, from which it follows
\begin{align}
  \label{eq:10}
  \mu(A)<-c \quad \iff \quad PA+A^TP<-2cP.
\end{align}
Using Theorem \ref{thm:1} with \eqref{eq:10} and taking $\rho(y)\triangleq y^2$ in \eqref{eq:11} gives a standard version of Krasovskii's theorem.

\bibliographystyle{ieeetr}
\bibliography{$HOME/Documents/Books/books}
\end{document}